\newcommand{\newreptheorem}[2]{\newtheorem*{rep@#1}{\rep@title}\newenvironment{rep#1}[1]{\def\rep@title{#2 \ref{##1}}\begin{rep@#1}}{\end{rep@#1}}}
\newtheorem{thm}{Theorem}[section]
\newtheorem*{thm*}{Theorem}
\newtheorem{lem}[thm]{Lemma}
\newtheorem*{lem*}{Lemma}
\newtheorem*{prop*}{Proposition}
\theoremstyle{definition}
\newtheorem{defn}[thm]{Definition}
\title{Linking number and writhe in random linear embeddings of graphs}
\author{Erica Flapan}
\address{Department of Mathematics,
610 N. College Ave.,
Pomona College,
Claremont, CA USA}
\email{elf04747@pomona.edu}
\author{Kenji Kozai}
\address{Department of Mathematics,
University of California, Berkeley,
970 Evans Hall \#3840,
Berkeley, CA 94720-3840}
\email{kozai@math.berkeley.edu}
  \subjclass{57M15, 57M25, 05C10, 92C40, 92E10}
    \keywords{polymer topology, confined polymers, entanglement, linear embeddings of graphs, linking in spatial graphs}
\begin{document}

\begin{abstract}
In order to model entanglements of polymers in a confined region, we consider the linking numbers and writhes of cycles in random linear embeddings of complete graphs in a cube.  Our main results are that for a random linear embedding of $K_n$ in a cube, the mean sum of squared linking numbers and the mean sum of squared writhes are of the order of $\theta(n(n!))$.  We obtain a similar result for the mean sum of squared linking numbers in linear embeddings of graphs on $n$ vertices, such that for any pair of vertices, the probability that they are connected by an edge is $p$.  We also obtain experimental results about the distribution of linking numbers for random linear embeddings of these graphs.  Finally, we estimate the probability of specific linking configurations occurring in random linear embeddings of the graphs $K_6$ and $K_{3,3,1}$.

\end{abstract}

\maketitle

\section{Introduction}

Long polymers become tangled up as a result of being tightly packed in a confined region.  For example, 46 human chromosomes are packed together inside the nucleus of a cell whose diameter can be as little as $10^{-5}$ times the length of a single chromosome.  The entanglement that results affects the processes of replication and transcription of the DNA.  For synthetic polymers, tangling is correlated with viscoelastic properties, and hence is important in the design and synthesis of new elastic materials.  As these examples illustrate, understanding the tangling of polymers is useful for explaining and controlling molecular behaviour.  However, since detailed visualizations of molecular entanglements are not yet technologically possible, their study has been approached through mathematical modeling rather than experimental observation.

Many authors have considered uniform random distributions of open and closed polygonal chains in a cube as a model for long molecular chains in a confined region (see for example \cite{arsuaga07}, \cite{Arsuaga09}, \cite{Diao93}, \cite{Diao}, \cite{panagiotou10},  \cite{Portillo}, \cite{Numerical}).  Of particular note, Arsuaga et al \cite{arsuaga07} obtained a formula for the mean squared linking number of two uniform random $n$-gons in a cube, and showed that the probability of linking between a given simple closed curve in the cube and a uniform random $n$-gon grows at a rate of at least $1-\mathcal{O}\left(\frac{1}{\sqrt{n}}\right)$.  More recently, Panagiotou et al \cite{panagiotou10} has shown that the mean squared linking number, the mean squared writhe, and the mean squared self-linking number of oriented uniform random open or closed chains with $n$ vertices in a cube all grow at a rate of $\mathcal{O}(n^2)$. 

 Many of the results that have been obtained about the linking of two random uniform polygons in a cube are restricted to pairs of polygons of the same length.  However, there is no biological reason for all polymers in a given region to have the same length.  While no theoretical results have been proven thus far about linking between uniform random $n$- and $m$-gons in a confined region, Arsuaga et al \cite{arsuaga07} has observed from numerical simulations that the linking probability of a random linear $n$-gon and $m$-gon in a cube seems to be bounded below by $1-\mathcal{O}\left(\frac{1}{\sqrt{nm}}\right)$.  
 
In order to obtain theoretical results about linking between random chains of different lengths as well as to measure entanglement in a more general way, we take a new approach.  In particular, the above models use {\it ordered} sequences of $n$ points chosen from a uniform random distribution of points in a cube to define one or two $n$-gons with linear edges.  By contrast, we begin with an {\it unordered} set of $n$ points chosen from a uniform random distribution of points in a cube.  We then consider every possible pair of disjoint polygons obtained by adding line segments between some number of points in the set.   By taking the sum of the squared linking numbers of all such pairs of polygons, we obtain a single number which represents the linking of all pairs of polygons with vertices in this set regardless of whether the polygons are of the same or distinct lengths.  In addition, in order to measure the entanglement of individual polymers in a confined region, we consider the sum of the mean squared writhes over all polygons with vertices in our set.
 
In addition to modeling entanglement of polymers in confined regions, our results can be seen in the context of  linear embeddings of graphs in $\mathbb{R}^3$ (that is, embeddings whose edges are realized by straight line segments).  In particular, the set of polygons we are considering are the cycles in a linear embedding of the complete graph $K_n$ in $\mathbb{R}^3$. Probably the most significant result in the study of such embeddings was the proof by Negami \cite{negami91} that for every knot or link $J$, there is an integer $R(J)$ such that every linear embedding of the complete graph $K_{R(J)}$ in $\mathbb{R}^3$ contains $J$.  In addition, several authors have obtained results characterizing what links can occur in linear embeddings of specific graphs.  In particular, Hughes \cite{Hughes} and Huh and Jeon \cite{huh07} gave combinatorial proofs that every linear embedding of $K_6$ contains either one or three Hopf links and no other links.  More recently, Nikkuni \cite{Nikkuni}  obtained the same result with a topological proof.  Naimi and Pavelescu \cite{naimi14} proved that every linear embedding of $K_9$ contains a non-split link of three components, and showed in \cite{naimi15} that every linear embedding of $K_{3,3,1}$ contains either $1,2,3,4$, or $5$ non-trivial links.

Our main results concern the rate of growth of two measures of entanglement.  Since rates of growth can be measured in several ways, for clarity we make the following definitions. 

\begin{defn} Let $f(n)$ be a function of the naturals.

\begin{itemize} 
	\item $f(n)$ is said to be of the order of $\mathcal{O}(g(n))$ if there exists a constant $C>0$
	such that for sufficiently large $n$, $$f(n) \leq C g(n).$$

	\item $f(n)$ is said to be of the order of $\theta(g(n))$, if there exist constants $c$, $C > 0$ such that
	for sufficiently large $n$, $$c g(n) \leq f(n) \leq C g(n).$$ 
	\end{itemize}
	
\end{defn}
\medskip

Section 2 is devoted to the proofs of the following two theorems about entanglement of {\it random linear embeddings} of complete graphs inside a cube $C^3=[0,1]^3$. That is, embeddings of complete graphs whose vertices are given by a random uniform distribution of $n$ points in the cube and whose edges are realized by straight line segments.

\begin{repthm}{thm:ev_ss_link} Let $n\geq 6$, and let $K_n$ be a random linear embedding of the complete graph on $n$ vertices in the cube $C^3$.  Then the mean sum of squared linking numbers for $K_n$ is of the order of $\theta(n(n!))$.
\end{repthm}
 
\begin{repthm}{thm:Wr} Let $n\geq 3$, and let $K_n$ be a random linear embedding of the complete graph on $n$ vertices in the cube $C^3$.   Then the mean sum of squared writhe for $K_n$ is of the order of 
	$\theta(n(n!))$.
\end{repthm} 

The ideas of the proofs of these results are as follows. The first theorem is proved using Lemma \ref{lem:ev_mn_link}, which shows that the expected linking
number of random cycles of length $k$ and length $l$ is on the order of $\theta(kl)$.
The complete graph $K_n$ contains on the order of $\theta((n-1)!)$ links,
both of whose cycles are of length $\frac{n}{2}$.  Heuristically, the linking number from these
cycles dominate the sum of squared linking numbers, resulting in a mean
sum of squared linking numbers on the order of $\theta(n(n!))$. To prove the second theorem, we observe that there
are many more cycles of length $n$ than of any other length, and the number of such cycles is on the
order of $\theta((n-1)!)$.  We then use the result of Panagiotou et al \cite{panagiotou10} that the mean squared writhe of
an $n$-cycle is on the order of $\theta(n^2)$ to obtain the desired result.


In Section 3, we consider a set of $n$ points chosen from a uniform random distribution of points in a cube, and then assign a probability that a given pair of vertices is joined with an edge.  In this way, we obtain a subgraph of $K_n$ with a given probability.  In particular, we use the following definition, originally due to Gilbert \cite{gilbert59}.

\begin{defn}
	Let $n \in \mathbb{N}$ and $p \in (0,1)$. Then a {\it $(n,p)$-graph} is
	a graph on $n$ vertices, such that for any pair of vertices, the probability that
	they are connected by an edge is $p$.
\end{defn}

We obtain several results on knotting and linking in random linear embeddings of $(n,p)$-graphs, including the following.

\begin{repthm}{thm:randomknot}
	For any knot $J$, the probability that a random linear embedding
	of an $(n,p)$-graph contains a cycle isotopic to $J$ goes to 1 as $n \rightarrow
	\infty$.
\end{repthm}

\begin{repthm}{thm:nplink}
	For any $n\geq 6$ and $p\in (0,1)$, the mean sum of squared linking numbers of a random linear embedding of an $(n,p)$-graph is of the order of $\theta(p^nn(n!))$.
\end{repthm}

In Section 4, we apply the inequalities that we obtain in Section 2 to random linear embeddings of the graphs $K_6$ and $K_{3,3,1}$ in order to estimate the probability of specific linking configurations occurring.

Finally, in Section 5, we sample random linear graph embeddings and make some observations about the distribution of linking numbers.

\bigskip

 \section{Random linear embeddings of $K_n$}
 \medskip
 
\begin{defn}
	A \textit{random linear embedding} of a graph $G$, is an embedding of $G$ in the unit cube  $C^3 = [0,1]^3$ such that the vertices of $G$ are embedded with a
	uniform distribution, and every edge $(v_i,v_j)$ of $G$ is realized by a straight line
	segment between $v_i$ and $v_j$.
\end{defn}

Arsuaga et al \cite{arsuaga07} prove the following lemma.

\begin{lem}[\cite{arsuaga07}]\label{lem:q}
	Let $l_1,l_2,l_1',l_2'$ denote edges in a random linear embedding of a graph with
	an orientation assigned to each edge.  Let $\epsilon_i$ denote the signed crossing of $l_i$ and $l_i'$, and let $E[\epsilon_1\epsilon_2]$ denote the expected value of $\epsilon_1\epsilon_2$.
	\begin{enumerate}
		\item If the endpoints of $l_1,l_2,l_1',l_2'$ are distinct, then
			$E[\epsilon_1\epsilon_2] = 0$.
		\item If $l_1=l_2$, the endpoints of $l_1'$ and $l_2'$ are distinct, and
			both $l_1',l_2'$ are disjoint from $l_1=l_2$,
			then $E[\epsilon_1\epsilon_2] = 0$.
		\item Define variables as follows:
		\begin{itemize}
		\item 		Let $2s$ denote the probability that $l_1$ and $l_1'$ cross when
			$l_1$ and $l_1'$ are disjoint.
			
\item  Let $u=E[\epsilon_1\epsilon_2]$ when
			$l_1 = l_2$, $l_1'$ and $l_2'$ share exactly one endpoint,
			and $l_1'\cup l_2'$ is disjoint from $l_1=l_2$.
			
			\item Let $v=E[\epsilon_1\epsilon_2]$ when $l_1$ and $l_2$ share exactly one endpoint, $l_1'$ and $l_2'$ share exactly
			one endpoint, and $l_1 \cup l_2$ and $l_1' \cup l_2'$ are
			disjoint. 
		
		\end{itemize}
		
			 Then, $q=s+2(u+v)>0$.
	\end{enumerate}
\end{lem}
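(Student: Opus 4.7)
The plan is to prove parts (1) and (2) with a common symmetry argument, and then deduce $q>0$ in part (3) by identifying $q$, up to a positive factor, with the expected squared linking number of two vertex-disjoint triangles in a random linear embedding.

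For parts (1) and (2), I will exploit the fact that the vertex positions are i.i.d.\ uniform on $C^{3}$, so any relabeling of vertices is measure-preserving. In part (1) all eight endpoints of $l_{1},l_{2},l_{1}',l_{2}'$ are distinct, so swapping the two endpoints of $l_{1}$ yields a measure-preserving map on the probability space. This swap reverses the orientation of $l_{1}$ and hence flips the sign of $\epsilon_{1}$, but does not touch any endpoint of $l_{2}$ or $l_{2}'$, so $\epsilon_{2}$ is unchanged. Therefore $(\epsilon_{1},\epsilon_{2})$ and $(-\epsilon_{1},\epsilon_{2})$ have the same joint distribution, forcing $E[\epsilon_{1}\epsilon_{2}]=0$. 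The argument for part (2) is identical after interchanging roles: since $l_{1}=l_{2}$ and the endpoints of $l_{1}',l_{2}'$ are distinct and disjoint from $l_{1}$, swapping the two endpoints of $l_{1}'$ flips $\epsilon_{1}$ and fixes $\epsilon_{2}$, again giving $E[\epsilon_{1}\epsilon_{2}]=0$.

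For part (3), I will compute $E[\mathrm{lk}(T,T')^{2}]$ for two vertex-disjoint triangles $T=v_{1}v_{2}v_{3}$ and $T'=v_{4}v_{5}v_{6}$ in a random linear embedding of $K_{6}$ in $C^{3}$. Using $\mathrm{lk}(T,T')=\tfrac{1}{2}\sum_{i,j=1}^{3}\epsilon(e_{i},e_{j}')$ and expanding the square gives $81$ expected-value terms, indexed by $(i,j,i',j')$. By parts (1) and (2), the surviving terms fall into three patterns, since any two distinct edges of a triangle share exactly one vertex and the two triangles are vertex-disjoint: the $9$ diagonal terms with $i=i'$, $j=j'$ contribute $E[\epsilon^{2}]=2s$ each; the $18$ terms with $i=i'$, $j\neq j'$ and the $18$ with $i\neq i'$, $j=j'$ each contribute $u$; and the $36$ terms with $i\neq i'$, $j\neq j'$ each contribute $v$. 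Summing and dividing by $4$ yields
\begin{equation*}
E[\mathrm{lk}(T,T')^{2}] \;=\; \tfrac{1}{4}\bigl(18 s + 36 u + 36 v\bigr) \;=\; \tfrac{9}{2}\,q,
\end{equation*}
so $q\geq 0$ is immediate.

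The main obstacle is upgrading this to the strict inequality $q>0$. For this I will exhibit an explicit six-point configuration in $C^{3}$ whose two straight-edge triangles form a Hopf link (for example, by rescaling an octahedral Hopf configuration to fit inside the cube), so that $|\mathrm{lk}(T,T')|=1$ at that configuration. Since the linking number is locally constant on the open set of six-tuples for which the edges of $T$ are disjoint from those of $T'$, an open neighborhood $U\subset (C^{3})^{6}$ of this configuration also satisfies $\mathrm{lk}(T,T')\neq 0$. As $U$ has positive Lebesgue measure, $E[\mathrm{lk}(T,T')^{2}]>0$, and therefore $q>0$.
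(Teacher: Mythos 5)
The paper does not prove this lemma at all --- it is quoted verbatim from Arsuaga et al.\ \cite{arsuaga07} and used as a black box --- so there is no in-paper proof to compare against; your task was effectively to reconstruct the argument of the cited source, and you have done so correctly. The sign-flip symmetry for parts (1) and (2) (transposing the two endpoint positions of a single edge is measure-preserving, reverses that edge's orientation, negates exactly one of $\epsilon_1,\epsilon_2$, and hence forces $E[\epsilon_1\epsilon_2]=-E[\epsilon_1\epsilon_2]=0$) is the standard and correct argument. For part (3), your count of the $81$ terms is right: $9$ diagonal terms contributing $2s$, $36$ terms contributing $u$, $36$ contributing $v$, giving $E\bigl[\mathrm{lk}(T,T')^2\bigr]=\tfrac{1}{4}\cdot 18q=\tfrac{9}{2}q$; this is precisely the identity $E\bigl[(\sum_{i,j}\epsilon_{ij})^2\bigr]=18q$ that the paper itself invokes in Appendix~\ref{sec:q} to estimate $q$ numerically, so your derivation supplies the justification the paper defers to \cite{arsuaga07}. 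The strict positivity via an explicit Hopf-linked pair of triangles, local constancy of the linking number on the open set of disjoint configurations, and positivity of Lebesgue measure of a neighborhood is sound. One cosmetic remark: you say the surviving terms are identified ``by parts (1) and (2),'' but for two vertex-disjoint triangles no term of the expansion actually falls under cases (1) or (2) --- any two distinct edges of a triangle share a vertex --- so nothing is killed; the classification into the $s$, $u$, $v$ patterns is direct. This does not affect correctness.
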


Arsuaga et al \cite{arsuaga07} use the above lemma to prove that the mean squared linking number of two uniform random polygons of length $n$ is $\frac{1}{2}n^2q$ where $q$ is defined in Case (3) of the lemma.  We now apply the above lemma in a similar way to obtain a formula for the mean squared linking number of two uniform random polygons where the number of vertices in the two polygons may differ.

\begin{lem}\label{lem:ev_mn_link}
	Let $n$, $m\ge 3$, and let the graph $G$ be the disjoint union of an $n$-cycle $L$ and an $m$-cycle $L'$.
	Then the mean squared linking number of a random linear embedding of 
	$G$ in the cube $C^3$ is $\frac{1}{2}nmq$, where $q$ is defined in Case (3) of Lemma \ref{lem:q}.
\end{lem}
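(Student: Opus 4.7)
The plan is to mimic the proof of the Arsuaga et al.\ formula $\frac{1}{2}n^2 q$, but to track the two cycle lengths separately. Orient both cycles, label the edges $e_1,\dots,e_n$ of $L$ and $e_1',\dots,e_m'$ of $L'$, and let $\epsilon_{ij}\in\{-1,0,1\}$ denote the signed crossing of $e_i$ with $e_j'$ in a generic projection. Then $\mathrm{lk}(L,L') = \tfrac{1}{2}\sum_{i,j}\epsilon_{ij}$, so squaring and taking expectations gives
$$E\bigl[\mathrm{lk}(L,L')^2\bigr] \;=\; \frac{1}{4}\sum_{i,i'=1}^{n}\sum_{j,j'=1}^{m}E[\epsilon_{ij}\epsilon_{i'j'}].$$
The entire argument reduces to classifying the index quadruples $(i,i',j,j')$ by the combinatorial incidence pattern of the four edges $(e_i,e_{i'},e_j',e_{j'}')$ and evaluating each type via Lemma \ref{lem:q}.

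Since $L$ and $L'$ are vertex-disjoint, any shared endpoint among the four edges must lie entirely within $L$ (among $e_i,e_{i'}$) or entirely within $L'$ (among $e_j',e_{j'}'$). By Lemma \ref{lem:q}(1), all quadruples with no shared endpoints contribute zero. By Lemma \ref{lem:q}(2), quadruples with $e_i=e_{i'}$ but $e_j',e_{j'}'$ distinct and nonadjacent also vanish, as do those with $e_j'=e_{j'}'$ but $e_i,e_{i'}$ distinct and nonadjacent (this symmetric case follows from the $L\leftrightarrow L'$ symmetry of $\mathrm{lk}$, which lets us relabel and reapply Lemma \ref{lem:q}(2)). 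The surviving quadruples are then exactly the three subcases of Lemma \ref{lem:q}(3), one of which has a symmetric counterpart.

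It remains to count. There are $nm$ diagonal quadruples with $e_i=e_{i'}$ and $e_j'=e_{j'}'$, each contributing $2s$. In a cycle of length $k$ the number of ordered pairs of distinct edges sharing a vertex is $2k$ (two ordered pairs per vertex). Hence there are $n\cdot 2m = 2nm$ quadruples of type ``$e_i=e_{i'}$ with $e_j',e_{j'}'$ adjacent'', each contributing $u$, plus $2nm$ quadruples of the symmetric type (``$e_j'=e_{j'}'$ with $e_i,e_{i'}$ adjacent''), again contributing $u$ each, and $2n\cdot 2m=4nm$ quadruples in which both pairs are adjacent, each contributing $v$. Summing,
$$\sum_{i,i',j,j'}E[\epsilon_{ij}\epsilon_{i'j'}]=2nm\,s+4nm\,u+4nm\,v=2nm\,(s+2u+2v)=2nm\,q,$$
whence $E[\mathrm{lk}(L,L')^2]=\tfrac{1}{2}nmq$, as claimed.

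The one genuinely nontrivial point is justifying why the ``symmetric'' case of Lemma \ref{lem:q}(2) — edges of $L'$ disjoint while those of $L$ coincide — also vanishes; I would handle this by invoking the $L\leftrightarrow L'$ symmetry of the linking number, which lets one relabel the four edges and apply Lemma \ref{lem:q}(2) as stated. Aside from this, the proof is pure combinatorial bookkeeping, and the factor $\tfrac{1}{2}nm$ (in place of $\tfrac{1}{2}n^2$) emerges naturally because each edge count in $L$ contributes a factor of $n$ or $2n$, while each edge count in $L'$ independently contributes a factor of $m$ or $2m$.
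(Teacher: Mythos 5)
Your proof is correct and follows essentially the same route as the paper's: write $\mathrm{lk}(L,L')=\tfrac{1}{2}\sum_{i,j}\epsilon_{ij}$, expand the square, kill the non-incident terms via Lemma \ref{lem:q}, and count the surviving quadruples to get $\tfrac{1}{4}\cdot 2nm(s+2u+2v)=\tfrac{1}{2}nmq$. If anything, your bookkeeping is slightly more explicit than the paper's, which lists only the $j'=j+1$ cross-terms and absorbs the symmetric $u$-terms (same edge of $L'$ against adjacent edges of $L$) into a factor of $2$ via the $\tfrac{1}{2}$ coefficient, exactly the relabeling symmetry you spell out at the end.
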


\begin{proof}
	Let the edges of $L$ be $l_1,l_2,\dots,l_n$ and the edges of $L'$ be $l_1', l_2',\dots,l_m'$, both cyclically ordered
	and oriented. 
	Let $\epsilon_{ij}$ denote the signed crossing of $l_i$ and $l_j'$. Then,
	the linking number of $L$ and $L'$ is given by
	\begin{equation*}
		lk(L,L') = \frac{1}{2} \sum_{i=1}^n \sum_{j=1}^m \epsilon_{ij}.
	\end{equation*}
	\medskip
	
	Hence the expected value of the mean squared linking number is given by:
	\medskip
	
	\begin{align*}
		E \left[ \left(\frac{1}{2} \sum_{i=1}^n \sum_{j=1}^m \epsilon_{ij}\right)^2\right]
			=& \frac{1}{4} E\left[\left(\sum_{i=1}^n \sum_{j=1}^m \epsilon_{ij}\right)^2\right]\\
			=& \frac{1}{4} \sum_{i=1}^n\sum_{j=1}^m E[\epsilon_{ij}^2] +
				\frac{1}{2}\sum_{i=1}^n\sum_{j=1}^m (E[\epsilon_{ij}\epsilon_{i(j-1)}]
				+E[\epsilon_{ij}\epsilon_{i(j+1)}])\\
				& + \frac{1}{2}\sum_{i=1}^n\sum_{j=1}^m (E[\epsilon_{ij}\epsilon_{(i+1)(j+1)}]
				+E[\epsilon_{ij} \epsilon_{(i-1)(j+1)}]).
	\end{align*}
	Note that those cross terms which we know are $0$ by Cases (1) and (2) of Lemma \ref{lem:q} have been omitted from the above expansion. 
	
	Let $2s$ denote the probability that a pair of edges $l_i$ and $l_j'$ cross.  Then the first term in the above expansion is $$ \frac{1}{4} \sum_{i=1}^n\sum_{j=1}^m E[\epsilon_{ij}^2] =\frac{1}{2}nms.$$ 
	
	 Let $u$ denote the expected value of the product of signed crossings of an edge $l_i$ of $L$ with consecutive edges $l_j'$ and $l_{j\pm1}'$ of $L'$.  Then the second term in the above expansion is given by $$\frac{1}{2}\sum_{i=1}^n\sum_{j=1}^m (E[\epsilon_{ij}\epsilon_{i(j-1)}]
				+E[\epsilon_{ij}\epsilon_{i(j+1)}])=nmu.$$

	 Let $v$ denote the expected value of the product of signed crossings of consecutive edges $l_i$ and $l_{i\pm 1}$ of $L$ with consecutive edges $l_j'$ and $l_{j+1}'$ respectively of $L'$. Then the third term in the above expansion is given by $$\frac{1}{2}\sum_{i=1}^n\sum_{j=1}^m (E[\epsilon_{ij}\epsilon_{(i+1)(j+1)}]
				+E[\epsilon_{ij} \epsilon_{(i-1)(j+1)}])=nmv$$

	Finally, let $q=s+2(u+v)$.  Then we have
		
	\begin{equation*}
		E \left[ \left(\frac{1}{2} \sum_{i=1}^n \sum_{j=1}^m \epsilon_{ij}\right)^2\right]
			= \frac{1}{2}nms + nmu + nmv = \frac{1}{2} nm(s+2(u+v))=\frac{1}{2} nmq.
	\end{equation*}
\end{proof}
\medskip

Using the above lemma, we now prove the following theorem.

\begin{thm}\label{thm:ev_ss_link} Let $n\geq 6$, and let $K_n$ be a random linear embedding of the complete graph on $n$ vertices in the cube $C^3$.  Then the mean sum of squared linking numbers for $K_n$ is of the order of $\theta(n(n!))$.
\end{thm}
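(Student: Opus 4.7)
My plan is to apply Lemma \ref{lem:ev_mn_link} to each pair of vertex-disjoint cycles in $K_n$ and then sum the contributions. Since the squared linking number is orientation-independent, I will view cycles as unoriented. By linearity of expectation, the mean sum of squared linking numbers equals
$$\frac{q}{2}\sum_{\{L,L'\}} |L|\cdot|L'|,$$
the sum being over unordered pairs of vertex-disjoint cycles in $K_n$, each weighted by the product of its two lengths.

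The combinatorial heart of the proof is counting disjoint cycle pairs of prescribed lengths. The number of unordered pairs of disjoint cycles of lengths $k$ and $l$ in $K_n$ with $k \neq l$ works out to $\binom{n}{k}\binom{n-k}{l}\cdot\frac{(k-1)!}{2}\cdot\frac{(l-1)!}{2} = \frac{n!}{4kl(n-k-l)!}$, with an extra factor of $1/2$ when $k=l$ that does not affect the asymptotics. Multiplying each such count by $\frac{1}{2}kl$, the $kl$ cancels beautifully and I would obtain
$$E = \frac{q\,n!}{8}\sum_{\substack{k,l\ge 3\\ k+l\le n}} \frac{1}{(n-k-l)!}.$$
Reparametrizing by $j = n-k-l$ and noting that there are exactly $n-j-5$ admissible pairs $(k,l)$ with $k+l=n-j$ and $k,l\ge 3$, this becomes $\frac{q\,n!}{8}\sum_{j=0}^{n-6}\frac{n-j-5}{j!}$.

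From here the asymptotics fall out in both directions. The lower bound comes from the $j=0$ term alone: pairs of disjoint cycles whose union spans all $n$ vertices contribute $\frac{q(n-5)n!}{8}$, which is $\theta(n(n!))$. This matches the heuristic given in the introduction, namely that the dominant contribution comes from roughly $(n-1)!$ pairs of nearly-spanning cycles. For the upper bound, the bounds $n - j - 5 \le n$ and $\sum_{j\ge 0} \tfrac{1}{j!} = e$ yield $E \le \frac{e\,q\,n\cdot n!}{8}$, completing the $\theta(n(n!))$ estimate.

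I do not anticipate a serious obstacle; the only delicate step is bookkeeping the factors of $2$ arising from unoriented-versus-oriented cycles and from ordered-versus-unordered pairs, together with enforcing the range constraints $k,l\ge 3$ and $k+l\le n$ on the summation. Beyond that, everything is a direct combinatorial calculation built on top of Lemma \ref{lem:ev_mn_link}.
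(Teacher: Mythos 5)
Your proposal follows essentially the same route as the paper: the same count of disjoint cycle pairs, the same application of Lemma \ref{lem:ev_mn_link}, the same cancellation of $kl$, the same reparametrization by the number of unused vertices, and the same lower bound from the spanning term plus upper bound via $\sum_j 1/j! = e$. The only discrepancy is a factor of $2$ in your constant (your sum over ordered pairs $(k,l)$ double-counts the unordered cycle pairs with $k \neq l$, so the prefactor should be $\tfrac{q\,n!}{16}$ rather than $\tfrac{q\,n!}{8}$), which is exactly the bookkeeping issue you flagged and is immaterial to the $\theta(n(n!))$ conclusion.
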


\begin{proof}  Let $k$, $l\geq 3$ such that $k+l\leq n$.  If  $k \neq l$, then the number of disjoint pairs of cycles in $K_n$ such that one cycle has $k$ vertices and the other cycle has $l$ vertices is given by

	\begin{equation*}
		{n \choose k}{n-k \choose l}\frac{(k-1)!}{2}\frac{(l-1)!}{2}.
	\end{equation*}
	\medskip
	
	If $k=l$, then this number is given by
	
	\begin{equation*}
		\frac{1}{2} {n \choose k}{n-k \choose l}\frac{(k-1)!}{2}\frac{(l-1)!}{2},
	\end{equation*}
	\medskip
	
	By Lemma \ref{lem:ev_mn_link}, we know that the mean squared linking number of
	a $k$-cycle and an $l$-cycle in $K_n$ is $\frac{1}{2}klq$, where $q$ is defined in Case (3) of Lemma \ref{lem:q}. 
	Thus, we obtain the mean sum of squared
	linking numbers over all disjoint pairs of cycles in $K_n$ as 
		
	\begin{align*}
		\frac{q}{4} \sum_{k=3}^{n-3} \sum_{l=3}^{n-k}
			& kl{n \choose k}{n-k \choose l}\frac{(k-1)!}{2}\frac{(l-1)!}{2}\\
		&=\frac{q}{4} \sum_{k=3}^{n-3} \sum_{l=3}^{n-k}
			{n \choose k}{n-k \choose l}\frac{k!}{2}\frac{l!}{2}\\
		&=\frac{q}{16} \sum_{k=3}^{n-3} \sum_{l=3}^{n-k} \frac{n!}{(n-k-l)!}.
	\end{align*}
	
	\medskip
	
	Observe that the double sum $$\sum_{k=3}^{n-3} \sum_{l=3}^{n-k} \frac{n!}{(n-k-l)!}$$
	counts the number of ways to obtain disjoint subsets of $k\geq 3$ and $l\geq 3$ ordered points
	from the set of $n$ points. This same quantity can alternatively be counted by choosing an ordered list of
	$i=k+l$ points out of $n$, then picking a number $3 \leq j \leq i-3$, so that
	the first $j$ points are in the subset of $k$ points, and the rest are in the subset of
	$l$ points. Hence, we have the equality:
	
	\begin{equation*}
		\sum_{k=3}^{n-3} \sum_{l=3}^{n-k} \frac{n!}{(n-k-l)!}= \sum_{i=6}^n \frac{n!}{(n-i)!}(i-5).
	\end{equation*}
	\medskip
	
	If we only consider the $i=n$ term in the sum, we obtain the following lower bound for the mean sum of squared linking numbers.
	
	\begin{equation*}
		\frac{q}{16} \sum_{i=6}^n \frac{n!}{(n-i)!}(i-5) \geq \frac{q}{16}\frac{n!}{0!} (n-5)= \frac{q}{16}(n-5)n!.
	\end{equation*}
	\medskip
	
	For sufficiently large $n$, we have $n-5>\frac{n}{2}$.  Thus we have the lower bound
	
	\begin{equation*}
		\frac{q}{16} \sum_{i=6}^n \frac{n!}{(n-i)!}(i-5) \geq  \frac{q}{32}(n)n!.
	\end{equation*}
	\medskip
	
	For an upper bound, we find that,
	
	\begin{align*}
		 \frac{q}{16}\sum_{i=6}^n \frac{n!}{(n-i)!}(i-5)  &\leq\frac{q}{16} n! \sum_{i=6}^n \frac{n}{(n-i)!}\\
			&= \frac{q}{16}n(n!)\sum_{i=6}^n \frac{1}{(n-i)!}\\
			&\leq \frac{q}{16}n(n!) \sum_{m=1}^\infty \frac{1}{m!}\\
			&=\frac{q}{16}n(n!)e.
	\end{align*}
	
		\medskip
	
	Putting these inequalities together, we see that the mean sum of squared linking numbers is of the order of $\theta(n(n!))$.
\end{proof}
\medskip

Another way to model entanglement is to consider the tangling of individual cycles rather than the linking between cycles.  In particular, given a fixed oriented $k$-cycle $J_k$ in $\mathbb{R}^3$, we define the {\it directional writhe} $\text{Wr}_\xi(J_k)$ projected in a direction perpendicular to a unit vector $\xi\in S^2$ as the algebraic sum of the signed crossings of $J_k$.  In order to avoid issues of sign, it is preferable to work instead with the {\it directional squared writhe}, which is defined as $\text{Wr}^2_\xi(J_k)=(\text{Wr}_\xi(J_k))^2$.  Now if we average the directional squared writhe $\text{Wr}^2_\xi(J_k)$ over all possible direction vectors $\xi\in S^2$, we obtain the {\it mean squared writhe} denoted by $\text{Wr}^2(J_k)$.  More formally, we define 
\begin{equation*}
		\text{Wr}^2(J_k)= \frac{1}{4\pi}\int_{S^2} \text{Wr}_\xi^2(J_k)d\xi
		\end{equation*}

\medskip

 Panagiotou et al \cite {panagiotou10} prove that the mean squared writhe of a random linear embedding of a $k$-cycle is of the order of $\mathcal{O}(k^2)$.  Rather than focusing on a single cycle, we are interested in obtaining a single value representing the complexity of the entanglement of all cycles $C$ in a random linear embedding of $K_n$.  Thus we define the {\it mean sum of squared writhe} of $K_n$ as the expected value 
 
 $$E[\sum_{C\subseteq K_n} \text{Wr}^2(C)]$$
 \medskip
 
 \noindent over all random linear embeddings of $K_n$.   
 
 \medskip
 
 We will make use of the following lemma from \cite{panagiotou10} which is similar to Lemma \ref{lem:q}.

\begin{lem}[\cite{panagiotou10}] \label{lem:q'}
	Let $l_1,l_2,l_1',l_2'$ denote edges in a random linear embedding of a graph with
	an orientation assigned to each edge, let $\epsilon_i$ denote the signed crossing of $l_i$ and $l_i'$, and let $E[\epsilon_1\epsilon_2]$ denote the expected value of $\epsilon_1\epsilon_2$.
 Also, let $s$, $u$, and $v$ be defined in Case (3) of Lemma \ref{lem:q}, and let
	$w=E[\epsilon_1\epsilon_2]$ when $l_1,l_2,l_1',l_2'$ are consecutive
	edges. Then $q'=3s+2(2u+v+w) > 0$.
\end{lem}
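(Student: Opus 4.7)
My plan is to mirror the strategy used to prove Lemma \ref{lem:ev_mn_link}, but applied to the writhe of a single random linear $k$-cycle $J_k$ rather than the linking number of two disjoint cycles. The idea is to show that $E[\text{Wr}^2(J_k)]$ equals a positive polynomial in $k$ whose coefficients are built from $s,u,v,w$, and that the dominant term is a positive multiple of $q'$. Since $E[\text{Wr}^2(J_k)]$ is an expectation of a non-negative quantity that is generically strictly positive, this will force $q' > 0$.

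Concretely, let the edges of $J_k$ be $l_1,\ldots,l_k$, cyclically ordered and oriented, and let $\epsilon_{ij}$ be the signed crossing of $l_i$ and $l_j$ in a projection perpendicular to $\xi\in S^2$ (adjacent edges share a vertex so $\epsilon_{ij}=0$ whenever $j\equiv i\pm1$). Let $P$ denote the set of unordered pairs $\{i,j\}$ of non-adjacent indices. Then $\text{Wr}_\xi(J_k)=\sum_{\{i,j\}\in P}\epsilon_{ij}$, so
\begin{equation*}
E[\text{Wr}^2(J_k)] \;=\; \tfrac{1}{4\pi}\int_{S^2}\sum_{\{i,j\},\{i',j'\}\in P} E[\epsilon_{ij}\epsilon_{i'j'}]\,d\xi.
\end{equation*}

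Next I would classify the quadruples $(\{i,j\},\{i',j'\})\in P\times P$ by the incidence pattern of the edge set $\{l_i,l_j,l_{i'},l_{j'}\}$. Cases (1) and (2) of Lemma \ref{lem:q} apply verbatim to edges inside one cycle, killing the two generic configurations (all four edges pairwise disjoint; two pairs sharing one edge with the remaining edges disjoint). The surviving configurations are: (c) the diagonal $\{i,j\}=\{i',j'\}$, contributing $E[\epsilon_{ij}^2]=2s$; (d) pairs sharing one edge in $J_k$ with the two remaining edges sharing a vertex and disjoint from the common edge, contributing $u$; (e) pairs sharing no edges but with $l_i\cup l_j$ meeting $l_{i'}\cup l_{j'}$ in the pattern that defines $v$; and (f) the new "four consecutive edges" case, contributing $w$. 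A cyclic count for each configuration in a $k$-cycle gives combinatorial coefficients which, when collected, produce an expression of the form $E[\text{Wr}^2(J_k)]=A(k)\cdot q'$ for a strictly positive $A(k)$, where the multiplicities of $s,u,v,w$ land on $3,\,4,\,2,\,2$, matching $q'=3s+2(2u+v+w)$.

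Finally, to conclude $q'>0$ it suffices to show $E[\text{Wr}^2(J_k)]>0$ for some $k$. The function $\text{Wr}^2(J_k)$ is non-negative, continuous in the vertex positions on a full-measure open subset of $(C^3)^k$, and not identically zero: for instance, when $k\geq 6$ a positive-measure set of configurations realizes a non-planar cycle (even a trefoil) whose writhe in a.e.\ projection direction is nonzero. Hence the expectation is strictly positive, forcing $q'>0$.

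The main obstacle is the combinatorial bookkeeping in the enumeration step, particularly isolating configuration (f): the "four consecutive edges" case is the genuinely new contribution beyond Lemma \ref{lem:q} and is easy to conflate with subcases of (d) or (e). Verifying that the tally lands exactly on the coefficients $3$, $4$, $2$, $2$ (rather than, say, $2$ in front of $s$ as one might naively expect from the diagonal terms alone) requires carefully tracking how the cyclic adjacency relations in $J_k$ redistribute some "sharing-one-edge" configurations into the $w$ bucket when edges happen to be consecutive.
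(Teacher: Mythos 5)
First, note that the paper itself gives no proof of this lemma: it is quoted directly from Panagiotou et al.\ \cite{panagiotou10}, so there is no internal argument to compare yours against. Judged on its own terms, your proposal has a genuine gap at its central step. You claim that classifying the cross terms in $E[\mathrm{Wr}^2(J_k)]$ makes the tally ``land on'' the coefficients $3,4,2,2$ so that $E[\mathrm{Wr}^2(J_k)]=A(k)\,q'$. That identity is false. Carrying out the count you describe, the diagonal terms contribute $(k^2-3k)s$, the shared-edge terms $(2k^2-8k)u$, the $v$-configurations $(2k^2-10k)v$, and the four-consecutive-edge configurations only $2kw$; collecting these gives exactly the formula the paper later uses in the proof of Theorem \ref{thm:Wr}, namely $E[\mathrm{Wr}^2(J_k)]=qk^2-(6q-q')k$. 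The quantity $w$ (and hence $q'$) enters only in the term linear in $k$, while the quadratic term is governed by $q=s+2(u+v)$. Consequently, non-negativity of the mean squared writhe yields only $qk-6q+q'\geq 0$, i.e.\ $q'\geq (6-k)q$, which is vacuous precisely in the range $k\geq 6$ where you invoke positivity via a trefoil configuration. So the final step ``$E[\mathrm{Wr}^2(J_k)]>0$ for some $k$ forces $q'>0$'' does not follow from your decomposition.

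A repaired version of your idea does exist, but it is a different argument and needs an extra input. For $k=5$ the generic counting degenerates: there are no surviving $v$-configurations, every disjoint pair of edge-pairs is a $w$-configuration, and the expansion collapses to $E[\mathrm{Wr}^2(J_5)]=10(s+u+w)=5(q'-q)$. Non-negativity (indeed strict positivity, since a generic pentagon is non-planar) then gives $q'\geq q$, and combining with $q>0$ from Lemma \ref{lem:q} yields $q'>0$. Note that this route cannot stand alone as a proof of the lemma as you framed it: it leans on the positivity of $q$, which is itself a nontrivial result of \cite{arsuaga07}, and it requires verifying the $k=5$ case by hand rather than relying on the general-$k$ formula, whose derivation assumes $k$ large enough that the adjacency configurations do not wrap around the cycle.
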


\medskip
 
\begin{thm}\label{thm:Wr} Let $n\geq 3$, and let $K_n$ be a random linear embedding of the complete graph on $n$ vertices in the cube $C^3$.   Then the mean sum of squared writhe for $K_n$ is of the order of 
	$\theta(n(n!))$.
\end{thm}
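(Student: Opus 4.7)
The plan is to imitate the counting strategy of Theorem~\ref{thm:ev_ss_link}, but now summing the mean squared writhe over individual cycles in place of summing mean squared linking numbers over disjoint pairs of cycles. By linearity of expectation,
\begin{equation*}
E\left[\sum_{C\subseteq K_n}\text{Wr}^2(C)\right]=\sum_{k=3}^n N_k\,E[\text{Wr}^2(C_k)],
\end{equation*}
where $N_k=\binom{n}{k}\tfrac{(k-1)!}{2}=\tfrac{n!}{2k(n-k)!}$ counts the $k$-cycles of $K_n$ and $C_k$ denotes a random linear $k$-cycle in $C^3$.

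The key analytic input is the two-sided estimate $E[\text{Wr}^2(C_k)]=\theta(k^2)$. The upper bound is exactly the result of Panagiotou et al \cite{panagiotou10}. For the matching lower bound, I would redo their expansion: writing $\text{Wr}(C_k)=\sum\epsilon_{ij}$ over pairs of non-adjacent edges of $C_k$, squaring, and applying Lemma \ref{lem:q'} to discard the cross-terms that vanish (by the analogs of Cases (1) and (2) of Lemma \ref{lem:q}), one obtains an expression of the form $\tfrac{1}{2}k^2 q'+O(k)$ after careful bookkeeping of how many surviving terms of each type $s,u,v,w$ appear. Since $q'>0$, this is $\Omega(k^2)$ for $k$ sufficiently large.

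Plugging this into the displayed sum cancels the factor of $k$ in the denominator of $N_k$, so the whole expression reduces, up to a positive multiplicative constant, to $\sum_{k=3}^n\tfrac{n!\,k}{(n-k)!}$. For the upper bound I would estimate $k\leq n$ and use $\sum_{k=3}^n\tfrac{1}{(n-k)!}\leq\sum_{m=0}^\infty \tfrac{1}{m!}=e$ to obtain $\mathcal{O}(n\cdot n!)$. For the lower bound I would simply retain the single term $k=n$, which by itself contributes a positive constant multiple of $n\cdot n!$ while all other summands are non-negative. The two bounds together give $\theta(n(n!))$, as required.

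The main obstacle is the lower bound $E[\text{Wr}^2(C_k)]\geq c\,k^2$, since Panagiotou et al state only the big-$\mathcal{O}$ half of this estimate. Establishing it requires identifying the surviving terms in $E[(\sum\epsilon_{ij})^2]$, counting how many of each type occur, and then invoking the strict positivity $q'>0$ from Lemma \ref{lem:q'} to conclude that the $\theta(k^2)$ leading term does not degenerate. Once this is in hand, the remaining combinatorial manipulations parallel those of Theorem \ref{thm:ev_ss_link} and are essentially routine.
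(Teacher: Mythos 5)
Your proposal follows essentially the same route as the paper: decompose the mean sum of squared writhe over cycle lengths using the count $\frac{n!}{2k(n-k)!}$ of $k$-cycles in $K_n$, feed in a $\theta(k^2)$ estimate for the mean squared writhe of a random linear $k$-cycle, keep only the $k=n$ term for the lower bound, and bound $\sum_k 1/(n-k)!$ by $e$ for the upper bound. The ``main obstacle'' you identify --- the lower bound $E[\text{Wr}^2(C_k)]\geq ck^2$ --- is not actually an obstacle: Panagiotou et al prove the exact formula $\text{Wr}^2(J_k)=qk^2-(6q-q')k$, which the paper quotes directly, and whose leading coefficient is $q>0$ by Lemma \ref{lem:q} (not $\tfrac{1}{2}q'$ as in your sketch), so both halves of the $\theta(k^2)$ estimate follow at once.
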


\begin{proof}  For some $k\leq n$,  let $J_k$ be a $k$-cycle in $K_n$.   It follows from Panagiotou et al  \cite{panagiotou10} that the mean squared writhe satisfies	
	
	$$\text{Wr}^2(J_k)=qk^2-(6q-q')k$$

\medskip

\noindent	where $q$ is defined in Case (3) of Lemma \ref{lem:q}.   Also, by Lemma~\ref{lem:q'} we know that $q'>0$.  Thus we have

$$\text{Wr}^2(J_k)>qk^2-6qk.$$  
\medskip

	\noindent  Hence, we have the following lower bound for the mean sum of squared writhe.
	
	\begin{equation*}
		E[\sum_{J\subseteq K_n} \text{Wr}^2(J)]\geq \sum_{k=3}^n (qk^2-6qk)
			\frac{n!}{(n-k)!(2k)}=\frac{qn!}{2}\sum_{k=3}^n\frac{k-6}{(n-k)!}
	\end{equation*} 
	\medskip
			
	\noindent  Taking only the term when $k=n$, we see that	
	\begin{equation*}
		\frac{qn!}{2}\sum_{k=3}^n\frac{k-6}{(n-k)!} \geq \left(\frac{qn!}{2} \right)(n-6).
	\end{equation*}

	For sufficiently large $n$, we have $n-6>\frac{n}{2}$.  Thus we obtain the lower bound
	
		\begin{equation*}
	E[\sum_{J\subseteq K_n} \text{Wr}^2(J)] \geq \left(\frac{q}{4} \right)(n)n!.
	\end{equation*}

	In order to get an upper bound, first observe that for any $k$-cycle $J_k$, by Panagiotou et al \cite{panagiotou10} we have
			
	$$\text{Wr}^2(J_k)=qk^2-(6q-q')k$$
	
	\medskip 
	
	\noindent Thus, taken over all cycles $J$ in $K_n$, we have the expected value
	
	$$E[\sum_{J\subseteq K_n} \text{Wr}^2(J)]= \sum_{k=3}^n (qk^2-(6q-q')k) \frac{n!}{(n-k)!(2k)}.$$
	\medskip
	
	\noindent Now, by Lemma \ref{lem:q}, $q>0$.  Thus we obtain the following upper bound.

	\begin{align*}
		\sum_{k=3}^n (qk^2-(6q-q')k) \frac{n!}{(n-k)!(2k)}&\leq \sum_{k=3}^n (qk+q')
			\frac{n!}{(n-k)!(2)}\\
			&= \frac{n!}{2}\sum_{k=3}^n \frac{qk + q'}{(n-k)!}\\
			&\leq(qn+q'n)\frac{n!}{2}\sum_{j=1}^\infty \frac{1}{j!}\\
			&\leq \left(\frac{q+q'}{2}\right)(n)n!e.
	\end{align*}

\medskip
	
	\noindent Putting these inequalities together we see, that the mean sum of squared writhe of a random linear
	embedding of $K_n$ is of order of 
	$\theta(n(n!))$.

\end{proof}

We remark that the calculations above can be modified to show that the total number of links
(resp. cycles) in a random linear embedding of $K_n$ is of the order of $\theta(\frac{n!}{n})$,
so that the mean average linking number (resp. writhe), where we average over all two component
links (resp. cycles) in $K_n$, is of the order of $\theta(n^2)$. This agrees with the results of
\cite{arsuaga07} and \cite{panagiotou10}, and shows that links (resp. cycles) of length $\theta(n)$
dominate the mean sum of linking number (resp. writhe) of the embedded graph.

\bigskip

\section{Random linear embeddings of $(n,p)$-graphs}
\medskip

\begin{thm}\label{thm:randomknot}
	For any knot $J$, the probability that a random linear embedding
	of an $(n,p)$-graph contains a cycle isotopic to $J$ goes to 1 as $n \rightarrow
	\infty$.
\end{thm}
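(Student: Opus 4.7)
The plan is to reduce the statement to a classical fact about random graphs via Negami's theorem. By the theorem of Negami cited in the introduction, there is a finite integer $r = R(J)$ such that every linear embedding of $K_r$ in $\mathbb{R}^3$ contains a cycle isotopic to $J$. Thus it suffices to show that the probability that an $(n,p)$-graph contains $K_r$ as a subgraph tends to $1$ as $n \to \infty$: for any such complete subgraph, the embedding inherited from the uniform random placement of vertices in $C^3$ is itself a linear embedding of $K_r$, so Negami's theorem yields the desired cycle.

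To establish the existence of a $K_r$ subgraph with high probability, I would partition the $n$ vertices into $\lfloor n/r \rfloor$ pairwise disjoint groups of size $r$ (discarding any leftover vertices). For any fixed group, the event that the induced subgraph on it is complete has probability $p^{\binom{r}{2}}>0$, because the $\binom{r}{2}$ potential edges inside the group are each present independently with probability $p$. Since the groups are pairwise disjoint, these events depend on disjoint sets of potential edges and are therefore mutually independent. Hence the probability that none of the groups spans a copy of $K_r$ is at most
\begin{equation*}
\bigl(1 - p^{\binom{r}{2}}\bigr)^{\lfloor n/r\rfloor},
\end{equation*}
which tends to $0$ as $n \to \infty$ because $p^{\binom{r}{2}}$ is a fixed positive constant.

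This is essentially a one-step reduction, and I do not expect any serious obstacle: the nontrivial input is Negami's theorem itself, together with the elementary random-graph estimate above. The one subtlety worth noting is that the $(n,p)$-graph model combines two independent sources of randomness, namely the uniform random placement of the $n$ vertices in $C^3$ and the independent inclusion of each potential edge with probability $p$. Because Negami's theorem applies to \emph{every} linear embedding of $K_r$, one does not need to analyze the conditional distribution of the vertex positions given that a particular $r$-subset spans a complete subgraph; the deterministic conclusion of Negami's theorem kicks in as soon as any such complete subgraph exists.
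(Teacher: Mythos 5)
Your proof is correct and follows essentially the same route as the paper: partition the vertices into $\lfloor n/R(J)\rfloor$ disjoint blocks, apply Negami's theorem to the complete graph on each block, and use independence of the edge choices across blocks. The only difference is that the paper lower-bounds the per-block success probability by $p^{R(J)}$ (it only needs the edges of the single cycle realizing $J$, which has length at most $R(J)$) rather than your cruder $p^{\binom{R(J)}{2}}$ (which demands the entire $K_{R(J)}$ be present); both are fixed positive constants, so the conclusion is unaffected.
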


\begin{proof}
	By Negami \cite{negami91}, there is an integer $R(J)$ such that every linear embedding of the complete graph $K_{R(J)}$ in $\mathbb{R}^3$ contains $J$.  Thus, given a set of vertices $\{v_1,v_2,\dots,v_{R(J)}\}$ 
	 in general position in the cube $C^3$, the linear embedding of $K_{R(J)}$ defined by these vertices
	necessarily contains the knot $J$ as a cycle.  Furthermore, since no cycle in $K_{R(J)}$ has length more than $R(J)$, a random linear embedding of an $(R(J),p)$-graph has probability at least
	$p^{R(J)}$ of containing the knot $J$.
	
	For any $n\geq R(J)$, by partitioning the vertices into sets of $R(J)$ vertices, we see that the probability
	that a random linear embedding of an $(n,p)$-graph
	contains $J$ is at least $1-(1-p^{R(J)})^{\lfloor n/R(J) \rfloor}$. This value goes to $1$
	as $n \rightarrow \infty$.
\end{proof}

Similar results can be obtained for other intrinsic properties of spatial embeddings
of graphs. For example,

\begin{thm}
	For an $(n,p)$-graph $G$,
	\begin{enumerate}
		\item The probability that a random linear embedding of $G$
			contains a non-trivial link of two components is at least
			$1-(1-p^6)^{\lfloor n/6 \rfloor}$. In particular, it goes to $1$ as $n \rightarrow \infty$.
		\item The probability that a random linear embedding of $G$
			contains a non-split link of three components is at least
			$1-(1-p^9)^{\lfloor n/9 \rfloor}$. In particular, it goes to $1$ as $n \rightarrow \infty$.
	\end{enumerate}
\end{thm}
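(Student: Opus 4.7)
The plan is to mimic the proof of Theorem \ref{thm:randomknot}, using unavoidability results for linear embeddings of $K_6$ and $K_9$ in place of Negami's theorem. The key observation is that in each case the guaranteed link is carried by a pair (resp. triple) of disjoint triangles, and hence uses exactly six (resp. nine) edges. This lets me decouple vertex placement from edge inclusion by first conditioning on the positions of a small block of vertices and then counting independent trials over disjoint blocks.

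For part (1), I would invoke the Hughes / Huh--Jeon / Nikkuni result cited in the introduction: every linear embedding of $K_6$ contains a non-trivial two-component link, realized by a pair of disjoint triangles. Hence, for any six vertices placed in general position in $C^3$, there is a specific pair of disjoint triangles on those vertices whose six edges form a non-trivial link; conditional on the vertex positions, the probability that all six of those edges appear in the $(n,p)$-graph is $p^6$, so the conditional probability of containing a non-trivial two-component link on those six vertices is at least $p^6$. I would then partition the $n$ vertices of $G$ into $\lfloor n/6 \rfloor$ disjoint blocks of six. The vertex placements and the edge inclusions in different blocks are independent, so the probability that none of the blocks produces a non-trivial two-component link is at most $(1-p^6)^{\lfloor n/6 \rfloor}$, and subtracting from $1$ gives the stated bound. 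Part (2) is completely analogous, using Naimi--Pavelescu's result that every linear embedding of $K_9$ contains a non-split three-component link; since $K_9$ has exactly nine vertices and three pairwise disjoint cycles require at least three vertices each, by pigeonhole the three components must all be triangles and so use nine edges in total.

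The only real obstacle is verifying in each case that the guaranteed link is realized by triangles rather than longer cycles, so that the edge-presence factor is genuinely $p^6$ (resp.\ $p^9$) and not $p^k$ for some $k$ larger than $6$ (resp.\ $9$). For part (1) this is built into the statement of the $K_6$ theorem, which identifies the link as a Hopf link on two disjoint triangles. For part (2) it follows from the vertex-count observation above: any three-component link in $K_9$ must use all nine vertices with three in each component. Once this is established, the convergence to $1$ as $n \to \infty$ is immediate from $p^6, p^9 \in (0,1)$, exactly as in the proof of Theorem \ref{thm:randomknot}.
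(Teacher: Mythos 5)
Your proof is correct and follows essentially the same approach as the paper: partition the vertices into disjoint blocks of six (resp.\ nine), note that the guaranteed link uses exactly six (resp.\ nine) edges since each component of a two-component link in $K_6$ (resp.\ three-component link in $K_9$) must be a triangle, and use independence across blocks. The only cosmetic difference is that for part (1) the paper cites the Conway--Gordon intrinsic linking result for $K_6$ rather than the Hughes / Huh--Jeon / Nikkuni refinement you invoke; either suffices.
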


\begin{proof}
	The proof is similar to that of Theorem \ref{thm:randomknot}.  For part (1), we apply the 
	result of Conway and Gordon \cite{conway83} that every embedding of $K_6$ contains a non-trivial link; and for part (2), we apply the result of Naimi and Pavelescu \cite{naimi14} that every linear embedding of $K_9$ contains a non-split link of three components.  
\end{proof}

Suppose that $n\geq 6$, $p\in (0,1)$, and $G$ is a random linear embedding of
	an $(n,p)$-graph.  Then for any $k$, $l\geq 3$ such that $k+l\leq n$, the probability that $G$ contains a pair of disjoint cycles where one is a $k$-cycle and the other is an $l$-cycle is $p^{k+l}$.  We can now modify the proof of Theorem~\ref{thm:ev_ss_link} to obtain the following.
	
\begin{thm}\label{thm:nplink}
	For any $n\geq 6$ and $p\in (0,1)$, the mean sum of squared linking numbers of a random linear embedding of an $(n,p)$-graph is of the order of $\theta(p^nn(n!))$.
\end{thm}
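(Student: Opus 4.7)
The plan is to adapt the proof of Theorem \ref{thm:ev_ss_link}, inserting a factor of $p^{k+l}$ to account for the probability that both cycles of a potential disjoint pair of a $k$-cycle and an $l$-cycle are actually present in the $(n,p)$-graph. I would begin by noting that in the Gilbert model, edge presence and vertex location are independent, so for any fixed pair of disjoint potential cycles of lengths $k$ and $l$, the expected contribution to the sum of squared linking numbers factors as $P(\text{both cycles present}) \cdot E[lk^2 \mid \text{both present}] = p^{k+l} \cdot \frac{1}{2}klq$, where Lemma \ref{lem:ev_mn_link} supplies the conditional expectation (since conditioning on edge presence does not alter the uniform distribution of vertex positions). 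Summing over all such pairs with the combinatorial counts from the proof of Theorem \ref{thm:ev_ss_link} and applying the identity $\binom{n}{k}\binom{n-k}{l}k!\,l! = n!/(n-k-l)!$, the mean sum of squared linking numbers becomes
\begin{equation*}
\frac{q}{16}\sum_{k=3}^{n-3}\sum_{l=3}^{n-k}\frac{n!}{(n-k-l)!}\,p^{k+l} \;=\; \frac{q}{16}\sum_{i=6}^n \frac{n!}{(n-i)!}(i-5)\,p^i,
\end{equation*}
where the re-indexing $i=k+l$ with weight $i-5$ mirrors the derivation in Theorem \ref{thm:ev_ss_link}.

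For the lower bound, I would retain only the $i=n$ term, which gives $\frac{q}{16}(n-5)n!\,p^n \geq \frac{q}{32}\,n\cdot n!\cdot p^n$ for $n$ sufficiently large, matching the desired order.

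The upper bound is the main subtlety. One must show that despite the $p^i$ factor varying with $i$, the sum is still $\mathcal{O}(p^n n(n!))$. The key manipulation is to re-index by $j = n-i$ so that the $p^n$ factor can be pulled outside:
\begin{equation*}
\sum_{i=6}^n \frac{n!}{(n-i)!}(i-5)\,p^i \;\leq\; n\cdot n!\cdot p^n \sum_{j=0}^{n-6}\frac{(1/p)^j}{j!} \;\leq\; n\cdot n!\cdot p^n\cdot e^{1/p}.
\end{equation*}
The step requiring most care, and the principal obstacle, is controlling this residual sum: the factors $(1/p)^j$ grow without bound in $j$, and only the factorial decay $1/j!$ keeps the tail summable. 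The exponential series $e^{1/p}$ is precisely what tames them, producing an $n$-independent constant (depending on $p$). Combining the matching upper and lower bounds yields the desired order $\theta(p^n n(n!))$.
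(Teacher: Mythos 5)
Your proposal is correct and follows essentially the same route as the paper: the same formula $\frac{q}{16}\sum_{i=6}^n \frac{n!}{(n-i)!}(i-5)p^i$, the same lower bound from the $i=n$ term, and the same re-indexing $j=n-i$ with the bound $\sum_j \frac{(1/p)^j}{j!}\leq e^{1/p}$ for the upper bound. Your explicit justification that edge-presence and vertex-position are independent (so the $p^{k+l}$ factor multiplies the conditional expectation from Lemma \ref{lem:ev_mn_link}) is a point the paper leaves implicit, but the argument is otherwise identical.
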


\begin{proof}
	We note that mean sum of squared linking numbers for a random linear embedding of an $(n,p)$-graph is
	\begin{equation*}
		\frac{q}{16} \sum_{i=6}^n p^i\frac{n!}{(n-i)!}(i-5).
	\end{equation*}
	
	Taking the term $i=n$ gives the lower bound:
	\begin{equation*}
		\frac{q}{16} p^n (n!) (n-5)\geq \frac{q}{32} p^n n(n!).
	\end{equation*}
	
	For the upper bound, we re-index over $k=n-i$, so that the sum becomes
	\begin{align*}
		n! \sum_{i=6}^n \frac{p^i (i-5)}{(n-i)!} &= n!\sum_{k=0}^{n-6} \frac{p^{n-k}(n-k-5)}{k!}\\
			&\leq n(n!)p^n \sum_{k=0}^{n-6} \frac{p^{-k}}{k!}\\
			&\leq n(n!)p^n \sum_{k=0}^\infty \frac{p^{-k}}{k!}\\
			&= n(n!)p^n e^{1/p}.
	\end{align*}
\end{proof}

\bigskip

\section{Random linear embeddings of $K_6$ and $K_{3,3,1}$}\label{sec:k6_k331}
\medskip

In this section, we apply the formulas in the proof of Theorem \ref{thm:ev_ss_link} to the
graphs $K_6$ and $K_{3,3,1}$ in order to find bounds on the probability of specific types of linking
occurring.

It follows from the formula $$\frac{q}{16} \sum_{k=3}^{n-3} \sum_{l=3}^{n-k} \frac{n!}{(n-k-l)!}$$ for the mean sum of squared linking numbers for $K_n$ in the proof of Theorem \ref{thm:ev_ss_link}, that the mean sum of squared linking numbers for $K_6$ is given by

\begin{equation*}
	\frac{q}{16} \frac{6!}{0!} = 45q.
\end{equation*}
\medskip

It has been shown independently by Hughes \cite{Hughes}, Huh and Jeon \cite{huh07}, and Nikkuni \cite{Nikkuni} that every linear embedding of $K_6$ contains
either exactly one or three Hopf links, and all other links in the embedding are trivial. Hence, for a given linear embedding of $K_6$, the sum of
squared linking numbers is either 1 or 3.  

This means that $45q = p_1 + 3p_3$, where $p_1$ is the probability that a
random linear embedding of $K_6$ has exactly one Hopf link, and $p_3 = 1-p_1$ is
the probability that a random linear embedding of $K_6$ has exactly three Hopf links.
This implies that
\begin{equation*}
	p_1 = \frac{3-45q}{2}.
\end{equation*}
\medskip

We estimated the value of $q$ to be $0.033867 \pm 0.000013$ (see the numerical
computation described in Appendix \ref{sec:q}).  This value is consistent with
the value of $q = 0.0338\pm 0.024$ obtained by \cite{arsuaga07}, which is verified
in \cite{panagiotou10}.  Thus we find that $p_1 = 0.7380 \pm 0.0003$.

For the complete tripartite graph $K_{3,3,1}$, Naimi and Pavelescu \cite{naimi15} show that every linear embedding
contains either $1,2,3,4$, or $5$ non-trivial links. Furthermore, they show that if the number of non-trivial links is
odd, all such links are Hopf links; whereas if the
number of non-trivial links is even, then one link is a $(2,4)$-torus link and the rest are Hopf links.  Since a Hopf link has linking number $\pm 1$ and a $(2,4)$-torus link has linking number $\pm 2$, it follows that the sum of squared linking numbers for any linear embedding of $K_{3,3,1}$ is either $1$, $3$, $5$ or $7$.

Now every pair of disjoint cycles in $K_{3,3,1}$ consists of one $3$-cycle and one $4$-cycle.  By Lemma~\ref{lem:ev_mn_link}, the mean squared linking number of a random linear embedding of a disjoint union of a $3$-cycle and a $4$-cycle is $\frac{q}{2}(3)(4)$.  Since there
are nine pairs of disjoint cycles in $K_{3,3,1}$, it follows that the expected value of the sum of squared linking numbers of a linear embedding of $K_{3,3,1}$ is

\begin{equation*}
	\frac{q}{2} (9)(3)(4) = 54 q.
\end{equation*}

\medskip

For each $k$, we let $p_k$ be the probability that there are $k$ non-trivial links in the embedding. Then, this expected value is equal to

\begin{equation*}
	1p_1 + 5p_2 + 3p_3 + 7p_4 + 5p_5 \geq p_1 + 3(1-p_1).
\end{equation*}
\medskip

Hence, it follows that the probability that there is precisely one non-trivial link in a random linear embedding of $K_{3,3,1}$ is given by 

\begin{equation*}
	p_1 \geq \frac{3-54q}{2} = 0.5856 \pm 0.0004.
\end{equation*}

\bigskip

\section{Experimental data}

In this section, we describe some experimental results we obtained for links in random linear embeddings of
graphs.

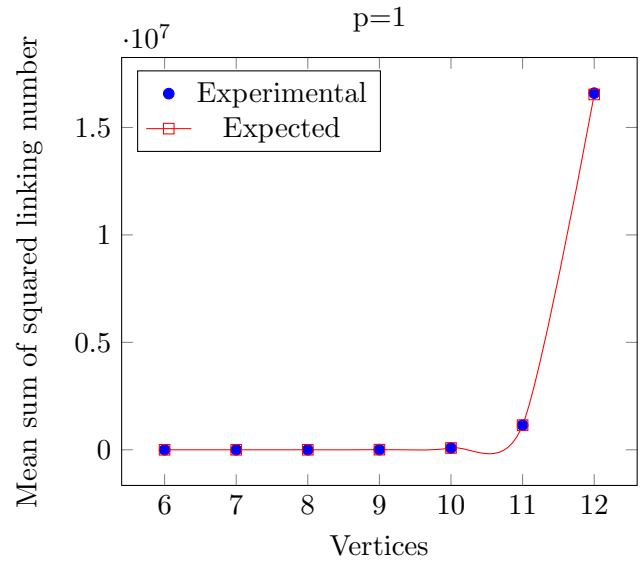
\begin{figure}
\begin{tikzpicture}
	\begin{axis}[
		xlabel=Vertices,
		ylabel=Mean sum of squared linking number,
		legend pos = north west,
		title = p{=}1]
	\addplot[only marks,mark=*,blue]
		plot coordinates {
			(6,1.542)
			(7,32.13)
			(8,470.844)
			(9,6269.844)
			(10,83741.85)
			(11,1146853.96)
			(12,16594972.7)
		};
	\addlegendentry{Experimental}
	
	\addplot[smooth,mark=square,red]
		plot coordinates {
			(6,1.52402)
			(7,32.0043)
			(8,469.397)
			(9,6272.85)
			(10,83531.3)
			(11,1148380)
			(12,16536400)
		};
	\addlegendentry{Expected}

	\end{axis}
\end{tikzpicture}
\caption{Experimental vs. expected mean sum of squared linking number for $p=1$. Experimental
data used 1000 samples for $6\leq n \leq 11$, and 100 samples for $n=12$.}\label{fig:p=1}
\end{figure}

\begin{figure}
\begin{tikzpicture}
	\begin{axis}[
		xlabel=Vertices,
		ylabel=Mean sum of squared linking number,
		legend pos = north west,
		title = p{=}0.5]	
	\addplot[only marks,color=blue,mark=*]
		plot coordinates {
			(6,0.023)
			(7,0.345)
			(8,2.633)
			(9,20.531)
			(10,170.59)
			(11,1284.983)
			(12,8806.644)
			(13,76533.147)
			(14,573495.272)
			(15,5512620.35)
		};
	\addlegendentry{Experimental}
	
	\addplot[smooth,color=red,mark=square]
		plot coordinates {
			(6,0.0238127)
			(7,0.333378)
			(8,3.0004)
			(9,23.0031)
			(10,167.523)
			(11,1221.16)
			(12,9147.73)
			(13,71336)
			(14,582553)
			(15,4993280)
		};
	\addlegendentry{Expected}

	\end{axis}
\end{tikzpicture}
\caption{Experimental vs. expected mean sum of squared linking number for $p=0.5$. Experimental
data used 1000 samples for $6\leq n \leq 14$, and 100 samples for $n=15$.}\label{fig:p=0.5}
\end{figure}

\begin{figure}
\begin{tikzpicture}
	\begin{axis}[
		xlabel=Vertices,
		ylabel=Mean sum of squared linking number,
		legend pos = north west,
		title = p{=}0.25]
	\addplot[only marks,mark=*,blue]
		plot coordinates {
			(6,0.000445)
			(7,0.00399)
			(8,0.0249)
			(9,0.12584)
			(10,0.57158)
			(11,2.40882)
			(12,10.10214)
			(13,40.8)
			(14,170.7914)
			(15,861.065)
			(16,3859.7)
			(17,20933.63)
		};
	\addlegendentry{Experimental}
	
	\addplot[smooth,mark=square,red]
		plot coordinates {
			(6,0.000372074)
			(7,0.00390678)
			(8,0.0247429)
			(9,0.125017)
			(10,0.564041)
			(11,2.41463)
			(12,10.1783)
			(13,43.2545)
			(14,188.121)
			(15,845.054)
			(16,3941.6)
			(17,19142.3)
		};
	\addlegendentry{Expected}

	\end{axis}
\end{tikzpicture}
\caption{Experimental vs. expected mean sum of squared linking number for $p=0.25$. Experimental
data used 200000 samples for $6 \leq n \leq 7$, 50000 samples for $8\leq n \leq 12$, 5000 samples for
$13 \leq n \leq 15$, 500 samples for $n=16$, and 100 samples for $n=17$ .}\label{fig:p=0.25}
\end{figure}
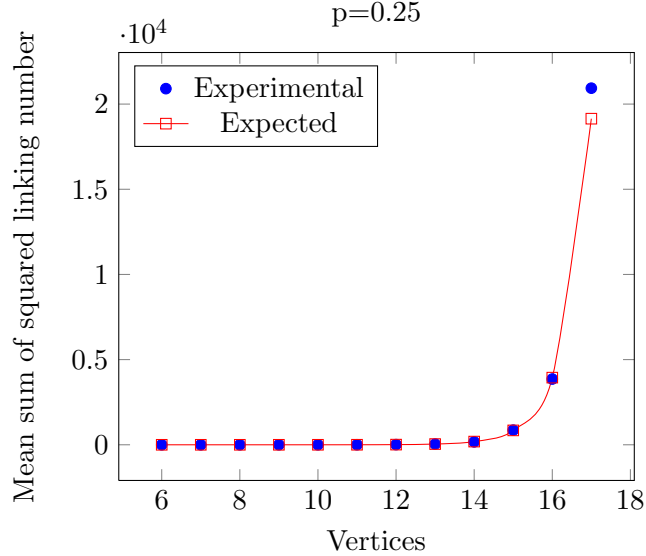

The data was generated using a Python program, taking
coordinates of the $n$ vertices to be uniformly distributed in (0,1). An edge between two vertices
is taken with probability $p$, and then the number of links with each linking number are
tallied. To give a more accurate picture of the distribution of linking numbers and the
average sum of linking numbers, we took multiple samples for each $(n,p)$.

We first investigated the mean sum of squared linking number for $p=1$, $p=0.5$, and $p=0.25$,
comparing experimental data with the expected number from the formula given in the proof of Theorem \ref{thm:nplink}.
For $p=1$, we took 1000 samples for $6 \leq n \leq 11$, and $100$ samples for $n=12$ (see
Figure \ref{fig:p=1}) For $p=0.5$, we took 1000 samples for $6 \leq n \leq 14$ and 100 samples for
$n=15$ (see Figure \ref{fig:p=0.5}). For $p=0.25$, we took 200,000 samples for $6 \leq n \leq 7$,
50,000 samples for $8 \leq n \leq 12$,
5000 samples for $13 \leq n \leq 15$, 500 samples for $n=16$, and 100 samples for $n=17$ (see
Figure \ref{fig:p=0.25}).

The experimental data follows the expected super-factorial growth. The deviation for the
mean sum of squared linking number is within approximately 10\%
of the expected value (and most data points are within 5\%), and the discrepancy for large
numbers of vertices and $p<1$ is due to the small number of samples taken due to
computational constraints. In addition, from the $n=6$, $p=1$ case, we can determine that of the 1000
random linear embeddings sampled, 729 had exactly one Hopf link, giving a 99\% confidence interval
for the probability that a random linear embedding of $K_6$ has one Hopf link of $0.729 \pm
0.036$, which agrees with the theoretical computation from Section \ref{sec:k6_k331}
and numerical value of $q$.

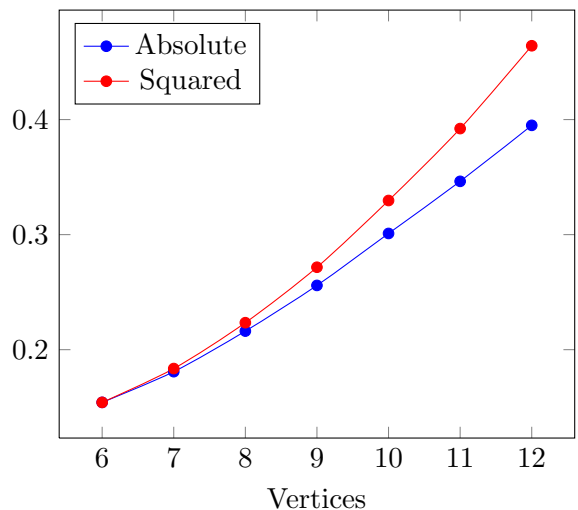
\begin{figure}
	\begin{tikzpicture}
		\begin{axis}[
			xlabel=Vertices,
			title=Mean average squared and absolute linking number,
			legend pos = north west]
		\addplot[smooth,mark=*,blue]
			plot coordinates {
				(6,0.1542)
				(7,0.18089143)
				(8,0.21615187)
				(9,0.25587044)
				(10,0.30102775)
				(11,0.34632512)
				(12,0.39499741)
			};
		\addlegendentry{Absolute}
		
		\addplot[smooth,mark=*,red]
			plot coordinates {
				(6,0.1542)
				(7,0.1836)
				(8,0.22346654)
				(9,0.27166879)
				(10,0.32966766)
				(11,0.3921902)
				(12,0.46417805)
			};
		\addlegendentry{Squared}
		\end{axis}
	\end{tikzpicture}
	
	\caption{Mean average squared and absolute linking number for $p=1$.}
	\label{fig:abs_p=1}
\end{figure}

\begin{figure}
	\begin{tikzpicture}
		\begin{axis}[
			xlabel=Vertices,
			title=Mean average squared and absolute linking number,
			legend pos = north west]
		\addplot[smooth,mark=*,blue]
			plot coordinates {
				(6,0.15862069)
				(7,0.17109885)
				(8,0.18931577)
				(9,0.22117335)
				(10,0.26123189)
				(11,0.30642165)
				(12,0.34530022)
				(13,0.39758039)
				(14,0.452373)
				(15,0.51551164)
			};
		\addlegendentry{Absolute}
		
		\addplot[smooth,mark=*,red]
			plot coordinates {
				(6,0.15862069)
				(7,0.17310587)
				(8,0.19268203)
				(9,0.23266435)
				(10,0.28072941)
				(11,0.33923057)
				(12,0.39304373)
				(13,0.47054833)
				(14,0.55837714)
				(15,0.66909645)
			};
		\addlegendentry{Squared}
		\end{axis}
	\end{tikzpicture}
	
	\caption{Mean average squared and absolute linking number for $p=0.5$.}
	\label{fig:abs_p=0.5}
\end{figure}

\begin{figure}
	\begin{tikzpicture}
		\begin{axis}[
			xlabel=Vertices,
			title=Mean average squared and absolute linking number,
			legend pos = north west]
		\addplot[smooth,mark=*,blue]
			plot coordinates {
				(6,0.16920152)
				(7,0.17129331)
				(8,0.18858327)
				(9,0.20228537)
				(10,0.22199017)
				(11,0.25260659)
				(12,0.28269296)
				(13,0.31606748)
				(14,0.351712589)
				(15,0.40751253)
				(16,0.44890358)
				(17,0.50687237)
			};
		\addlegendentry{Absolute}
		
		\addplot[smooth,mark=*,red]
			plot coordinates {
				(6,0.16920152)
				(7,0.17172369)
				(8,0.19103882)
				(9,0.20961455)
				(10,0.23224574)
				(11,0.26947671)
				(12,0.30877361)
				(13,0.35190677)
				(14,0.40603675)
				(15,0.49012264)
				(16,0.55865154)
				(17,0.66325654)
			};
		\addlegendentry{Squared}
		\end{axis}
	\end{tikzpicture}
	
	\caption{Mean average squared and absolute linking number for $p=0.25$.}
	\label{fig:abs_p=0.25}
\end{figure}

In addition, we computed the mean average squared linking number and the mean average
absolute linking number, where the average is taken over all links in the graph, and then
the mean is taken over all samples of a given size. The experimental data for the mean average
squared linking number follows a quadratic growth, as expected. From the samples we computed,
it appeared that the average absolute linking number was also quadratic, which differs
from the linear growth rate for absolute linking number of random polygons studied in
\cite{arsuaga07} and \cite{panagiotou10} (see Figures \ref{fig:abs_p=1}, \ref{fig:abs_p=0.5},
and \ref{fig:abs_p=0.25}). However, because of the small number of data points
that we could compute, this is inconclusive.

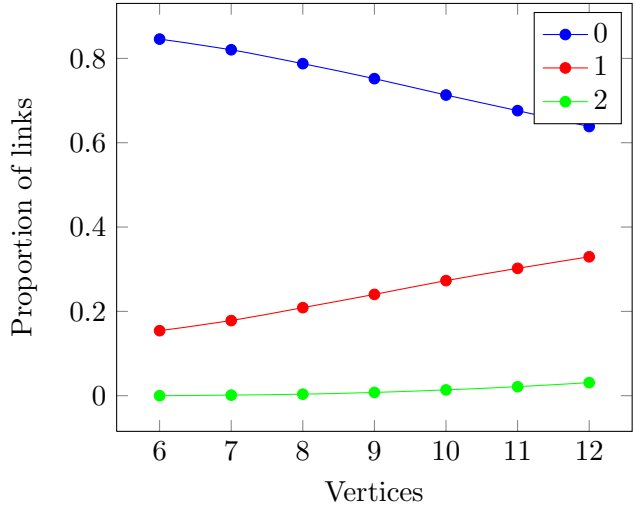
\begin{figure}
\begin{tikzpicture}
	\begin{axis}[
		xlabel=Vertices,
		ylabel=Proportion of links]
	\addplot[smooth,mark=*,blue]
		plot coordinates {
			(6,0.8458)
			(7,0.820462857)
			(8,0.787497864)
			(9,0.751976299)
			(10,0.713103823)
			(11,0.676078728)
			(12,0.638369158)
		};
	\addlegendentry{0}
	
	\addplot[smooth,mark=*,red]
		plot coordinates {
			(6,0.1542)
			(7,0.178182857)
			(8,0.208859991)
			(9,0.240229083)
			(10,0.272951386)
			(11,0.302038277)
			(12,0.329460525)
		};
	\addlegendentry{1}
	
	\addplot[smooth,mark=*,green]
		plot coordinates {
			(6,0)
			(7, 0.0013542857)
			(8,0.0036345515)
			(9,0.0077427965)
			(10,0.0137596085)
			(11,0.0213699091)
			(12,0.031001266)
		};
	\addlegendentry{2}
	
	\end{axis}
\end{tikzpicture}
\caption{Proportion of links in random linear embeddings of $K_n$ with linking number 0, 1, and 2.}
\label{fig:prop_p=1}
\end{figure}

\begin{figure}
\begin{tikzpicture}
	\begin{axis}[
		xlabel=Vertices,
		ylabel=Proportion of links]
	\addplot[smooth,mark=*,blue]
		plot coordinates {
			(6,0.84137931)
			(7,0.829904666)
			(8,0.812367362)
			(9,0.78453815)
			(10,0.748432941)
			(11,0.709669704)
			(12,0.677923769)
			(13,0.637439861)
			(14,0.597673808)
			(15,0.555268433)
		};
	\addlegendentry{0}
	
	\addplot[smooth,mark=*,red]
		plot coordinates {
			(6,0.15862069)
			(7,0.169091821)
			(8,0.185949506)
			(9,0.209784346)
			(10,0.241982862)
			(11,0.27454755)
			(12,0.299486929)
			(13,0.328969061)
			(14,0.355121606)
			(15,0.379621717)
		};
	\addlegendentry{1}
	
	\addplot[smooth,mark=*,green]
		plot coordinates {
			(6,0)
			(7, 0.0010035123)
			(8,0.0016831321)
			(9,0.0056435071)
			(10,0.0095068516)
			(11,0.0154786224)
			(12,0.021967601)
			(13,0.0321962452)
			(14,0.044472157)
			(15,0.0597652854)
		};
	\addlegendentry{2}
	
	\end{axis}
\end{tikzpicture}
\caption{Proportion of links in random linear embeddings of $(n,p)$ graphs with linking number 0, 1, and 2, when $p=0.5$.}
\label{fig:prop_p=0.5}
\end{figure}

\begin{figure}
\begin{tikzpicture}
	\begin{axis}[
		xlabel=Vertices,
		ylabel=Proportion of links]
	\addplot[smooth,mark=*,blue]
		plot coordinates {
			(6,0.830798479)
			(7,0.828921885)
			(8,0.812643855)
			(9,0.801379218)
			(10,0.783113242)
			(11,0.755705413)
			(12,0.730082667)
			(13,0.701429535)
			(14,0.674444358)
			(15,0.631532668)
			(16,0.6024813)
			(17,0.564692735)
		};
	\addlegendentry{0}
	
	\addplot[smooth,mark=*,red]
		plot coordinates {
			(6,0.169201521)
			(7,0.170862922)
			(8,0.186128587)
			(9,0.194956191)
			(10,0.211807728)
			(11,0.236103404)
			(12,0.257398452)
			(13,0.281484012)
			(14,0.300332073)
			(15,0.331579878)
			(16,0.349439669)
			(17,0.369996854)
		};
	\addlegendentry{1}
	
	\addplot[smooth,mark=*,green]
		plot coordinates {
			(6,0)
			(7, 0.002151926)
			(8,0.0012275587)
			(9,0.0036645901)
			(10,0.0050546504)
			(11,0.0080725997)
			(12,0.0122700812)
			(13,0.0166879708)
			(14,0.024314933)
			(15,0.0348267038)
			(16,0.0449491805)
			(17,0.0594121581)
		};
	\addlegendentry{2}
	
	\end{axis}
\end{tikzpicture}
\caption{Proportion of links in random linear embeddings of $(n,p)$ graphs with linking number 0, 1, and 2, when $p=0.25$.}
\label{fig:prop_p=0.25}
\end{figure}
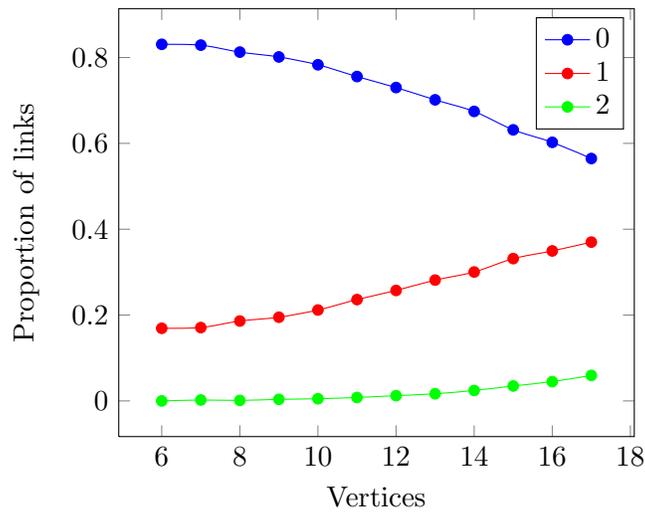

We also investigated the distributions of links with a given linking number in
random $(n,p)$ graphs (see Figures \ref{fig:prop_p=1}, \ref{fig:prop_p=0.5}, and
\ref{fig:prop_p=0.25}). Omitted from the figures are links with linking number greater
than $2$, which were detected in fewer than $1\%$ of the links in the samples that
we generated. We expect that for a fixed linking number $k$, the proportion of
links with that linking number will increase, peak, then decrease as $n \rightarrow \infty$.
However, due to computational constraints, we were not able to compute samples out
to large enough $n$ to see this behavior, even with linking number 1.

\bigskip

\appendix

\section{Computing $q$} \label{sec:q}

In order to obtain better estimates of the linking probabilities for $K_6$ and
$K_{3,3,1}$, we numerically estimated the value of $q$.  Consider two triangles described by the
consistently oriented edges $l_1,l_2,l_3$ and $l_1',l_2',l_3'$, and let $\epsilon_{ij}$
denote the signed crossing number between $l_i$ and $l_j'$.  Then it follows from the proof of Lemma \ref{lem:q} in \cite{arsuaga07} that 

\begin{equation*}
	E\left[\left(\sum_{i,j=1}^3 \epsilon_{ij}\right)^2\right] = 18q.
\end{equation*}
\medskip

But the quantity $\sum_{i,j=1}^3 \epsilon_{ij}$ is precisely twice the linking number
of the two triangles, which for a linear embedding is either $0$ or $\pm 1$. Hence,
$\frac{18q}{4}$ is the probability that a random linear embedding of two disjoint
triangles is linked. 

We wrote a Python program to generate random linear
embeddings of two triangles by taking six random points in $C^3$, described as
three (pseudo)random coordinates in $(0,1)$, and computing
the associated linking number. Out of the one billion samples generated,
152,402,780 were linked, giving a
99\% confidence interval for $q$ of $0.033867 \pm 0.000013$. 

\medskip

All code is available at  \url{http://math.berkeley.edu/~kozai/random_graphs/}.
\bigskip

\section*{Acknowledgements}

The first author would like to thank the Mathematical Science Research Institute for its hospitality during the Spring of 2015 which facilitated this collaboration.

\bigskip

\bibliographystyle{amsplain}
\bibliography{randomgraphs-nomr.bib}

\providecommand{\bysame}{\leavevmode\hbox to3em{\hrulefill}\thinspace}
\providecommand{\MR}{\relax\ifhmode\unskip\space\fi MR }
\providecommand{\MRhref}[2]{%
  \href{http://www.ams.org/mathscinet-getitem?mr=#1}{#2}
}
\providecommand{\href}[2]{#2}
\begin{thebibliography}{10}

\bibitem{arsuaga07}
J.~Arsuaga, T.~Blackstone, Y.~Diao, E.~Karadayi, and M.~Saito, \emph{Linking of
  uniform random polygons in confined spaces}, J. Phys. A \textbf{40} (2007),
  no.~9, 1925--1936.

\bibitem{Arsuaga09}
J.~Arsuaga, B.~Borgo, Y.~Diao, and R.~Scharein, \emph{The growth of the mean
  average crossing number of equilateral polygons in confinement}, J. Phys. A
  \textbf{42} (2009), no.~46, 465202, 9.

\bibitem{conway83}
J.~H. Conway and C.~McA. Gordon, \emph{Knots and links in spatial graphs}, J.
  Graph Theory \textbf{7} (1983), 445--453.

\bibitem{Diao}
Y.~Diao, C.~Ernst, S.~Saarinen, and U.~Ziegler, \emph{Generating random walks
  and polygons with stiffness in confinement}, J. Phys. A \textbf{48} (2015),
  no.~9, 095202, 19.

\bibitem{Diao93}
Y.~Diao, N.~Pippenger, and D.W. Sumners, \emph{On random knots}, Random
  knotting and linking ({V}ancouver, {BC}, 1993), Ser. Knots Everything,
  vol.~7, World Sci. Publ., River Edge, NJ, 1994, pp.~187--197.

\bibitem{gilbert59}
E.~N. Gilbert, \emph{Random graphs}, Ann. Math. Statist. \textbf{30} (1959),
  1141--1144.

\bibitem{Hughes}
C.~Hughes, \emph{Linked triangle pairs in straight edge embeddings of {$K_6$}},
  Pi Mu Epsilon Journal \textbf{12} (2006), no.~4, 213--218.

\bibitem{huh07}
Y.~Huh and C.~B. Jeon, \emph{Knots and links in linear embeddings of {$K_6$}},
  J. Korean Math. Soc. \textbf{44} (2007), no.~3, 661--671.

\bibitem{naimi14}
R.~Naimi and E.~Pavelescu, \emph{Linear embeddings of {$K_9$} are triple
  linked}, J. Knot Theory Ramifications \textbf{23} (2014), no.~3, 1420001, 9.

\bibitem{naimi15}
\bysame, \emph{On the number of links in a linearly embedded {$K_{3,3,1}$}}, J.
  Knot Theory Ramifications (2015), to appear.

\bibitem{negami91}
S.~Negami, \emph{Ramsey theorems for knots, links and spatial graphs}, Trans.
  Amer. Math. Soc. \textbf{324} (1991), no.~2, 527--541.

\bibitem{Nikkuni}
R.~Nikkuni, \emph{A refinement of the {C}onway-{G}ordon theorems}, Topology
  Appl. \textbf{156} (2009), no.~17, 2782--2794.

\bibitem{panagiotou10}
E.~Panagiotou, K.~C. Millett, and S.~Lambropoulou, \emph{The linking number and
  the writhe of uniform random walks and polygons in confined spaces}, J. Phys.
  A \textbf{43} (2010), no.~4, 045208, 28.

\bibitem{Portillo}
J.~Portillo, Y.~Diao, R.~Scharein, J.~Arsuaga, and M.~Vazquez, \emph{On the
  mean and variance of the writhe of random polygons}, J. Phys. A \textbf{44}
  (2011), no.~27, 275004, 19.

\bibitem{Numerical}
K.~Tsurusaki and T.~Deguchi, \emph{Numerical analysis on topological
  entanglements of random polygons}, Statistical models, {Y}ang-{B}axter
  equation and related topics, and {S}ymmetry, statistical mechanical models
  and applications ({T}ianjin, 1995), World Sci. Publ., River Edge, NJ, 1996,
  pp.~320--329.

\end{thebibliography}

\end{document}